\DeclareMathAlphabet{\mathcal}{OMS}{cmsy}{m}{n}
\theoremstyle{plain}
\newtheorem{theorem}{Theorem}[section]
\newtheorem{lemma}[theorem]{Lemma}
\theoremstyle{remark}
\newtheorem{remark}[theorem]{Remark}
\theoremstyle{definition}
\newtheorem{definition}{Definition}
\newenvironment{proof of theorem 1.1}{{\noindent \em Proof of Theorem 1.1.}}{\hfill $\Box$\par}
\newenvironment{proof of theorem 1.2}{{\noindent \em Proof of Theorem 1.2.}}{\hfill $\Box$\par}
\DeclareSymbolFont{EulerExtension}{U}{euex}{m}{n}
\DeclareMathSymbol{\euintop}{\mathop} {EulerExtension}{"52}
\DeclareMathSymbol{\euointop}{\mathop} {EulerExtension}{"48}
\begin{document}
	\title{A note on the Laplace transforms of certain generalized fractional integral operators}
	\author{Min-Jie Luo$^{\rm 1}$\thanks{Corresponding author}, Jing-Yi Shen$^{\rm 2}$, Ravinder Krishna Raina$^{\rm 3}$}
	\date{}
	\maketitle
	\begin{center}\small
		$^{1}$\emph{Department of Mathematics, School of Mathematics and Statistics, \\ Donghua University, Shanghai 201620, \\ 
			People's Republic of China.}\\
		E-mail: \texttt{mathwinnie@live.com}, \texttt{mathwinnie@dhu.edu.cn}
	\end{center}
	\begin{center}\small 
		$^{2}$\emph{Department of Mathematics, School of Mathematics and Statistics, \\ Donghua University, Shanghai 201620, \\ 
			People's Republic of China.}\\
		E-mail: \texttt{Shen031127@outlook.com}
	\end{center}
	\begin{center}\small
		$^{3}${\emph{M.P. University of Agriculture and Technology, Udaipur (Rajasthan), India\\
				\emph{Present address:} 10/11, Ganpati Vihar, Opposite Sector 5,\\
				Udaipur-313002, Rajasthan, India.}\\
			E-mail: \texttt{rkraina1944@gmail.com}; \texttt{rkraina\_7@hotmail.com}}  
	\end{center}
	
	
	\begin{abstract}
		
		In this paper, we derive certain formulas giving the Laplace transforms of two generalized fractional integral operators introduced recently in [Fract. Calc. Appl. Anal. 20 (2) (2017), 422--446]. The main results provide generalizations to various known results. Some useful remarks related to the results presented in this paper are also mentioned. \\
		
		\noindent\textbf{Keywords}: 
		Fox-Wright function, 
		Fractional integral operator, 
		Laplace transform, 
		$H$-function.
		\\
		
		\noindent\textbf{Mathematics Subject Classification (2020)}:
		26A33; 
		33C60. 
	\end{abstract}

	\section{Introduction}\label{Introduction}

The study of the Laplace transforms of various types of fractional integral and derivative operators is a basic and fundamental area of study in Fractional Calculus which has extensive applications in solving fractional integral and differential equations (see \cite{Gorenflo-Mainardi-1997}, \cite[Chapter 5]{Kilbas-Srivastava-Trujillo-Book-2006}, \cite{Luo-Raina-2025} and \cite[Chapter 4]{Podlubny-1999}). Since 2017, the first and third authors of this paper have published a series of papers (\cite{Luo-Raina-2017, Luo-Raina-2019, Luo-Raina-2022}) introducing a pair of fractional integral operators whose kernels involve a very special class of generalized hypergeometric function ${}_{r+2}F_{r+1}$ (see Definition \ref{New Definition} below). The Laplace transforms of these operators have so far not been investigated in detail, as suggested in \cite[pp. 79--80]{Luo-Raina-2025}. The purpose of this paper is therefore to study the Laplace transformation of such generalized forms of fractional calculus operators.

For any $m\in\mathbb{Z}$, let $\mathbb{Z}_{\geq m}:=\{n\in\mathbb{Z};n\geq m\}$, and also let $\mathbb{R}_{>0}:=\{x\in\mathbb{R};x>0\}=(0,\infty)$. As usual, the Pochhammer  symbol $(a)_k$ is defined by
\[
\left(a\right)_{k}:=\frac{\Gamma\left(a+k\right)}{\Gamma\left(a\right)}=
\begin{cases}
	1 & \left(k=0; ~ a\in\mathbb{C}\setminus\{0\}\right),\\
	a\left(a+1\right)\cdots\left(a+k-1\right) & \left(k\in\mathbb{N}; ~ a\in\mathbb{C}\right).
\end{cases}
\]
We shall use the convention of writing the finite sequence parameters $a_1$, $\cdots$, $a_p$ by $(a_p)$ and the product of $p$ Prochhammer symbols by $((a_p))_k\equiv (a_1)_k\cdots (a_p)_k$, where an empty product $p=0$ is treated as unity. The generalized hypergeometric function ${}_{p}F_{q}$ is then defined by the series
\begin{equation*}
	{}_{p}F_{q}\left[\begin{matrix}
		(a_p)\\
		(b_q)\end{matrix};z
	\right]
	:=\sum_{k=0}^{\infty}\frac{((a_p))_k}{((b_p))_k}
	\frac{z^k}{k!}~(|z|<1).
\end{equation*}
For its conditions of convergence and analytic continuation via Mellin-Barnes type integral, we refer the reader to \cite[pp. 30--31]{Kilbas-Srivastava-Trujillo-Book-2006} and \cite{NIST Handbook}.

\begin{definition}[{\cite[p. 423]{Luo-Raina-2017}}]\label{New Definition}
	Let $x, h, \nu\in\mathbb{R}_{>0}$, $\delta, a, b, f_1, \cdots, f_r\in\mathbb{C}$ and $m_1,\cdots,m_r\in\mathbb{Z}_{\geq0}$. Also, let $\Re(\mu)>0$ and $\varphi(s)$ be a suitable complex-valued function defined on $\mathbb{R}_{>0}$. Then, the fractional integral of a function $\varphi(x)$ of the first kind is defined by
	\begin{align}\label{Def New operator I}
		\left(\mathcal{I}\varphi\right)(x)
		&\equiv\left(\mathcal{I}\begin{smallmatrix}
			\mu; a,b: &(f_r+m_r)\\
			h; \nu,\delta: &(f_r)
		\end{smallmatrix}\varphi\right)(x)\notag\\
		&:=\frac{{\nu}x^{-\delta-\nu(\mu+ h)}}{\Gamma(\mu)}
		\int_{0}^{x}(x^\nu-s^\nu)^{\mu-1}
		{}_{r+2}F_{r+1}\left[\begin{matrix}
			a,b,\\
			\mu,
		\end{matrix}~
		\begin{matrix}
			(f_r+m_r)\\
			(f_r)
		\end{matrix};1-\frac{s^\nu}{x^\nu}\right]
		\varphi(s)s^{\nu h+\nu-1}\mathrm{d}s,
	\end{align}
	and the fractional integral of the second kind of a function $\varphi\left(x\right)$ is defined by
	\begin{align}\label{Def New operator II}
		\left(\mathcal{J}\varphi\right)(x)
		&\equiv\left(\mathcal{J}\begin{smallmatrix}
			\mu; a,b: &(f_r+m_r)\\
			h; \nu,\delta: &(f_r)
		\end{smallmatrix}\varphi\right)(x)\notag\\
		&:=\frac{\nu x^{\nu h+\nu-1}}{\Gamma(\mu)}
		\int_{x}^{\infty}(s^\nu-x^\nu)^{\mu-1}
		{}_{r+2}F_{r+1}\left[\begin{matrix}
			a,b,\\
			\mu,
		\end{matrix}~
		\begin{matrix}
			(f_r+m_r)\\
			(f_r)
		\end{matrix};1-\frac{x^\nu}{s^\nu}\right]
		\varphi(s)s^{-\delta-\nu(\mu+ h)}\mathrm{d}s.
	\end{align}
\end{definition}

Operators \eqref{Def New operator I} and \eqref{Def New operator II} include many important known fractional integral operators as special cases, such as the Riemann-Liouville operators, the Erd\'{e}lyi-Kober operators and the Saigo operators (see \cite{Luo-Raina-2022}). 

In Section \ref{Preliminaries}, we shall present some useful results regarding \eqref{Def New operator I} and \eqref{Def New operator II} and provide definitions of some special functions to be used later. In Section \ref{Main results}, we prove our main theorems, namely, the formulas for
\[
\mathcal{L}[x^\lambda \mathcal{I}\varphi](s)
~~~\text{and}~~~
\mathcal{L}[x^\lambda \mathcal{J}\varphi](s),
\]
where $\lambda\in\mathbb{C}$ and $\mathcal{L}$ denotes the Laplace transform of the function $\varphi(t)$ defined by 
\[
\mathcal{L}[\varphi](s):=\int_{0}^{\infty}\mathrm{e}^{-st}\varphi(t)\mathrm{d}t.
\]

\section{Preliminaries}\label{Preliminaries}

Following the earlier work in \cite{Luo-Raina-2017, Luo-Raina-2019} and \cite{Luo-Raina-2022}, we use for convenience sake the notations $\mathfrak{c}_1(t)$ and $\mathfrak{c}_2(t)$ given by
\begin{equation}\label{c1-c2}
	\left.\begin{aligned}
		\mathfrak{c}_1(t)&:=1+h+\frac{t}{\nu},\\
		\mathfrak{c}_2(t)&:=\mathfrak{c}_1(\delta-1)-\frac{t}{\nu}.
	\end{aligned}~~\right\}
\end{equation}
Also, for $k\in\mathbb{Z}_{\geq0}$, we define
\begin{equation}\label{pk}
	\mathfrak{p}_k:=\mu-a-b-k.
\end{equation}

Throughout the present paper, the sequence $A_k$ $\left(0\leq k\leq m:=m_1+\cdots+m_r\right)$ is always defined by
\begin{equation}\label{Miller summation formula II-Def of Ak}
	A_k=\sum_{j=k}^{m} {j\brace k}\sigma_{m-j}, ~~
	A_0=(f_1)_{m_1}\cdots(f_r)_{m_r}, ~~ A_m=1,
\end{equation}
where ${j\brace k}$ denotes the Stirling numbers of the second kind and $\sigma_j$ $(0\leq j\leq m)$ are generated by the relation
\[
(f_1+x)_{m_1}\cdots(f_r+x)_{m_r}=\sum_{j=0}^{m}\sigma_{m-j}x^j.
\]

\begin{lemma}[{\cite[p. 426]{Luo-Raina-2017}}]\label{Lemma FI of Power Function}
	Let $x, h, \nu\in\mathbb{R}_{>0}$, $\delta, \lambda, a, b, f_1,\cdots,f_r\in\mathbb{C}$ and $m_1,\cdots,m_r\in\mathbb{Z}_{\geq0}$.
	Also, let $\mathfrak{c}_1(t)$, $\mathfrak{c}_2(t)$ and
	$\mathfrak{p}_k$ be defined, respectively, by \eqref{c1-c2} and \eqref{pk}.
	Then there holds the following formulas:
	\begin{equation*} 
		\mathcal{I} x^{\lambda}
		=x^{\lambda-\delta}\sum_{k=0}^{m}\frac{A_k}{A_0}
		\frac{(a)_k(b)_k
			\Gamma(\mathfrak{c}_1(\lambda))
			\Gamma(\mathfrak{c}_1(\lambda)+\mathfrak{p}_k)}{
			\Gamma(\mathfrak{c}_1(\lambda)+\mu-a)
			\Gamma(\mathfrak{c}_1(\lambda)+\mu-b)},
	\end{equation*}
	provided that $\Re(\mu)>0$ and
	$\Re(\mathfrak{c}_1(\lambda))
	>-\min[0,\Re(\mathfrak{p}_m)]$, 
	and
	\begin{equation}\label{Lemma FI of Power Function II}
		\mathcal{J} x^{\lambda}
		=x^{\lambda-\delta}\sum_{k=0}^{m}\frac{A_k}{A_0}
		\frac{(a)_k(b)_k
			\Gamma(\mathfrak{c}_2(\lambda))
			\Gamma(\mathfrak{c}_2(\lambda)+\mathfrak{p}_k)}{
			\Gamma(\mathfrak{c}_2(\lambda)+\mu-a)
			\Gamma(\mathfrak{c}_2(\lambda)+\mu-b)},
	\end{equation}
	if $\Re(\mu)>0$ and
	$
	\Re(\mathfrak{c}_2(\lambda))
	>-\min[0,\Re(\mathfrak{p}_m)],
	$
	where $A_k$ $(0\leq k\leq m)$ is defined by \eqref{Miller summation formula II-Def of Ak}.
	\end{lemma}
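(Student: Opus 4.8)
The plan is to compute $\mathcal{I}x^{\lambda}$ directly from the definition \eqref{Def New operator I} by inserting $\varphi(s)=s^{\lambda}$, expanding the kernel ${}_{r+2}F_{r+1}$ into its defining power series in the variable $1-s^{\nu}/x^{\nu}$, and interchanging summation and integration. First I would perform the substitution $u=s^{\nu}/x^{\nu}$, which maps $[0,x]$ onto $[0,1]$ and turns each summand into a Beta integral: after collecting the powers of $x$ (which combine to the expected factor $x^{\lambda-\delta}$) and the powers of $u$, the $s$-integral becomes $\int_{0}^{1}u^{\mathfrak{c}_1(\lambda)-1}(1-u)^{\mu+n-1}\,\mathrm{d}u=B(\mathfrak{c}_1(\lambda),\mu+n)$, where the exponent is recognised as $\mathfrak{c}_1(\lambda)-1=h+\lambda/\nu$ by \eqref{c1-c2}. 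The factor $\Gamma(\mu+n)/[\Gamma(\mu)(\mu)_n]=1$ cancels the $(\mu)_n$ coming from the kernel, and a short rearrangement using $\Gamma(\mathfrak{c}_1(\lambda)+\mu+n)=\Gamma(\mathfrak{c}_1(\lambda)+\mu)(\mathfrak{c}_1(\lambda)+\mu)_n$ leaves
\[
\mathcal{I}x^{\lambda}=x^{\lambda-\delta}\frac{\Gamma(\mathfrak{c}_1(\lambda))}{\Gamma(\mathfrak{c}_1(\lambda)+\mu)}\,{}_{r+2}F_{r+1}\!\left[\begin{matrix}a,b,(f_r+m_r)\\ \mathfrak{c}_1(\lambda)+\mu,(f_r)\end{matrix};1\right].
\]

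So the problem reduces to evaluating a ${}_{r+2}F_{r+1}$ of unit argument whose numerator and denominator parameters $(f_r+m_r)$ and $(f_r)$ differ by the non-negative integers $m_1,\dots,m_r$. Here I would invoke the Miller-type reduction encoded in the definition of $A_k$. Writing $\tfrac{((f_r+m_r))_n}{((f_r))_n}=\tfrac1{A_0}\prod_{i=1}^{r}(f_i+n)_{m_i}=\tfrac1{A_0}\sum_{j=0}^{m}\sigma_{m-j}n^{j}$ and then converting the ordinary powers $n^{j}$ into falling factorials by means of the Stirling numbers of the second kind, one obtains the key expansion $\tfrac{((f_r+m_r))_n}{((f_r))_n}=\tfrac1{A_0}\sum_{k=0}^{m}A_k\,n(n-1)\cdots(n-k+1)$, with $A_k$ exactly as in \eqref{Miller summation formula II-Def of Ak}. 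Substituting this into the series, the falling factorial annihilates the terms with $n<k$ and shifts the summation index to $n=k+l$; using $(a)_{k+l}=(a)_k(a+k)_l$ (and similarly for $b$ and $\mathfrak{c}_1(\lambda)+\mu$), the inner sum collapses to a Gauss ${}_2F_1\!\left[a+k,b+k;\mathfrak{c}_1(\lambda)+\mu+k;1\right]$.

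Applying Gauss's summation theorem to each of these finitely many ${}_2F_1(1)$'s, for which $\gamma-\alpha-\beta=\mathfrak{c}_1(\lambda)+\mathfrak{p}_k$, $\gamma-\alpha=\mathfrak{c}_1(\lambda)+\mu-a$ and $\gamma-\beta=\mathfrak{c}_1(\lambda)+\mu-b$, and cancelling the common factor $\Gamma(\mathfrak{c}_1(\lambda)+\mu)$, yields precisely the claimed finite-sum formula for $\mathcal{I}x^{\lambda}$. The formula \eqref{Lemma FI of Power Function II} for $\mathcal{J}x^{\lambda}$ follows by the identical argument applied to \eqref{Def New operator II}, the only change being the reciprocal substitution $u=x^{\nu}/s^{\nu}$ on $[x,\infty)$, which produces the Beta exponent $\mathfrak{c}_2(\lambda)-1$ in place of $\mathfrak{c}_1(\lambda)-1$ and hence replaces $\mathfrak{c}_1(\lambda)$ by $\mathfrak{c}_2(\lambda)$ throughout.

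The main obstacle is this reduction step: one must justify the interchange of the infinite hypergeometric summation with the integration and the subsequent term-by-term application of Gauss's theorem, both of which require absolute convergence. Tracking these requirements is exactly what pins down the hypotheses. The Beta integral needs $\Re(\mathfrak{c}_1(\lambda))>0$, while Gauss's theorem for the $k$-th term needs $\Re(\mathfrak{c}_1(\lambda)+\mathfrak{p}_k)>0$, whose most restrictive instance (since $\mathfrak{p}_k=\mu-a-b-k$ decreases in $k$) is $k=m$. Together these give $\Re(\mathfrak{c}_1(\lambda))>-\min[0,\Re(\mathfrak{p}_m)]$, matching the stated condition, and analogously for $\mathfrak{c}_2(\lambda)$.
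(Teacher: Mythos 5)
Your derivation is correct: the substitution $u=s^{\nu}/x^{\nu}$ (resp. $u=x^{\nu}/s^{\nu}$) reducing each term to a Beta integral, the conversion of $((f_r+m_r))_n/((f_r))_n$ into a finite sum of falling factorials with the coefficients $A_k$ of \eqref{Miller summation formula II-Def of Ak}, and the termwise application of Gauss's theorem to the resulting ${}_2F_1(1)$'s together yield exactly the stated formulas, with the convergence bookkeeping correctly producing the condition $\Re(\mathfrak{c}_{1,2}(\lambda))>-\min[0,\Re(\mathfrak{p}_m)]$. The present paper gives no proof of this lemma --- it is imported verbatim from \cite[p. 426]{Luo-Raina-2017} --- but your argument is the standard one used there; the only cosmetic slip is that the factor cancelled after Gauss's theorem is $\Gamma(\mathfrak{c}_1(\lambda)+\mu+k)$ term by term rather than a single common $\Gamma(\mathfrak{c}_1(\lambda)+\mu)$.
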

	
Let $m, n, p, q \in \mathbb{Z}_{\geq0}$, $0 \leq m \leq q$, $0 \leq n \leq p$ and let $a_{i}, b_j\in \mathbb{C}, A_i, B_j\geq0$, where $1\leq i \leq p, 1\leq j \leq q$. The Fox $H$-function is defined by (see \cite[p. 1]{Kilbas-Saigo-Book-2004} and \cite[p. 58]{Kilbas-Srivastava-Trujillo-Book-2006}):
\begin{align}\label{H-Function-Def}
	H_{p, q}^{m, n}\left[z \left|\begin{matrix}
		(a_i, A_i)_{1,p} \\
		(b_j, B_j)_{1,q}
	\end{matrix}\right.\right]
	&=H_{p, q}^{m, n}\left[z \left|\begin{matrix}
		(a_1, A_1), \ldots,(a_p, A_p) \\
		(b_1, B_1), \cdots,(b_q, B_q)
	\end{matrix}\right.\right] \notag\\
	&=\frac{1}{2 \pi \mathrm{i}} \int_L \frac{\displaystyle\prod_{k=1}^m \Gamma(B_k s+b_k) \prod_{i=1}^n \Gamma(1-a_i-A_i s)}{\displaystyle \prod_{k=m+1}^q \Gamma(1-b_k-B_k s) \prod_{i=n+1}^p \Gamma(A_i s+a_i)}  z^{-s} \mathrm{d}s,
\end{align}
where $L$ is a suitable contour that separates the poles of  $\Gamma\left(B_k s+b_k\right)$ from the poles of  $\Gamma\left(1-a_i-A_i s\right)$. To further clarify the definition, we take $L=L_{\mathrm{i}c\infty}$, which is a contour starting at the point $c-\mathrm{i}\infty$ and terminating at the point $c+\mathrm{i}\infty$, where $c\in\mathbb{R}$. The properties of the $H$-function depend on the following indexes:
\begin{align}
	a^{*}&:=\sum_{i=1}^{n}A_i-\sum_{i=n+1}^{p}A_i
	+\sum_{j=1}^{m}B_j-\sum_{j=m+1}^{q}B_j,
	\label{H-function-Condition-1}\\
	\Delta&:=\sum_{j=1}^q B_j-\sum_{i=1}^p A_i,
	\label{H-function-Condition-2}\\
	\delta^{*}&:=\prod_{j=1}^{p}A_j^{-A_j}\prod_{j=1}^{q}B_j^{B_j},
	\label{H-function-Condition-3}\\
	\mu^{*}&:=\sum_{j=1}^{q}b_j-\sum_{i=1}^{p}a_i+\frac{p-q}{2},
	\label{H-function-Condition-4}\\
	a_1^{*}&=\sum_{j=1}^m B_j-\sum_{i=n+1}^p A_i.
	\label{H-function-Condition-5}
\end{align}
When $A_i=B_j=1$ $(i=1,\cdots,p; j=1,\cdots,q)$, the $H$-function \eqref{H-Function-Def} reduces to Meijer's $G$-function 
\begin{align*}
	G_{p, q}^{m, n}\left[z \left|\begin{matrix}
		(a_i, 1)_{1,p} \\
		(b_j, 1)_{1,q}
	\end{matrix}\right.\right]
	&=G_{p, q}^{m, n}\left[z \left|\begin{matrix}
		a_1, \cdots, a_p \\
		b_1, \cdots, b_q
	\end{matrix}\right.\right] \notag\\
	&=\frac{1}{2 \pi \mathrm{i}} \int_L \frac{\displaystyle\prod_{k=1}^m \Gamma(s+b_k) \prod_{i=1}^n \Gamma(1-a_i-s)}{\displaystyle \prod_{k=m+1}^q \Gamma(1-b_k-s) \prod_{i=n+1}^p \Gamma(s+a_i)}  z^{-s} \mathrm{d}s,
\end{align*}
where $L$ is the same contour taken for the $H$-function defined in \eqref{H-Function-Def}. We also have \cite[p. 67, Eq. (1.12.68)]{Kilbas-Srivastava-Trujillo-Book-2006}:
\begin{equation}\label{H-function-to-Fox-WrightF}
	H_{p, q+1}^{1, p}\left[z \left|\begin{matrix}
		(1-a_i, \alpha_i)_{1,p} \\
		(0,1), (1-b_j, \beta_j)_{1,q}
	\end{matrix}\right.\right]
	={}_{p}\Psi_{q}\left[\left.\begin{matrix}
		(a_j,\alpha_j)_{1,p}\\
		(b_j,\beta_j)_{1,q}
	\end{matrix}\right|-z\right],
\end{equation}
where ${}_{p}\Psi_{q}$ denotes the Fox-Wright function defined by \cite[p. 56, Eq. (1.11.14)]{Kilbas-Srivastava-Trujillo-Book-2006}
\begin{equation}\label{Fox-WrightF-Def}
	{}_{p}\Psi_{q}\left[\left.\begin{matrix}
		(a_j,\alpha_j)_{1,p}\\
		(b_j,\beta_j)_{1,q}
	\end{matrix}\right|z\right]:=\sum_{k=0}^{\infty}\frac{\displaystyle \prod_{j=1}^{p}\Gamma(a_j+\alpha_j k)}{\displaystyle\prod_{j=1}^{q}\Gamma(b_j+\beta_j k)}\frac{z^k}{k!}.
\end{equation}
If 
\begin{equation}\label{Delta1}
	\Delta':=\sum_{j=1}^{q}\beta_j-\sum_{\ell=1}^{p}\alpha_\ell>-1,
\end{equation}
then the series in \eqref{Fox-WrightF-Def} is absolutely convergent for all $z\in\mathbb{C}$ (see \cite[p. 56]{Kilbas-Srivastava-Trujillo-Book-2006}).

\section{Main results}\label{Main results}

Let $L^1(A)$ be the space of all Lebesgue measurable, complex-valued functions $\varphi:A\rightarrow \mathbb{C}$ with finite norm
\[
\|\varphi\|_1:=\int_{A}|\varphi(t)|\mathrm{d}t.
\]

\begin{theorem}\label{MainTh-1}	
	Let the conditions in Definition \ref{New Definition} be satisfied and let 
	\begin{equation}\label{MainTh-1-1}
		\Re(\mathfrak{c}_2(\lambda))
		>-\min[0,\Re(\mathfrak{p}_m)].
	\end{equation}
	Also, let $x^{\lambda-\delta} \varphi(x)\in L^1(\mathbb{R}_{>0})$. Then, for $\Re(s)>0$, we have
	\begin{equation}\label{MainTh-1-2}
		\mathcal{L}[x^\lambda \mathcal{I}\varphi](s)=\int_{0}^{\infty}K_{\mathcal{I}}(s,x)\varphi(x)\mathrm{d}x,
	\end{equation}
	where
	\begin{align}\label{MainTh-1-3}
		K_{\mathcal{I}}(s,x)
		\equiv \mathcal{J}x^\lambda\mathrm{e}^{-sx}
		&=x^{\lambda-\delta}\sum_{k=0}^{m}\frac{A_k}{A_0}(a)_k (b)_k\notag\\
		&\hspace{0.5cm}\cdot H_{2,3}^{3,0}\left[sx\left|\begin{matrix}
			(\mathfrak{c}_2(\lambda)+\mu-a,1/\nu),(\mathfrak{c}_2(\lambda)+\mu-b,1/\nu)\\
			(0,1),(\mathfrak{c}_2(\lambda),1/\nu),(\mathfrak{c}_2(\lambda)+\mathfrak{p}_k,1/\nu)
		\end{matrix}\right.\right].
	\end{align}
\end{theorem}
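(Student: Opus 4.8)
I would prove the two assertions bundled in the statement in turn: first the operational identity $K_{\mathcal{I}}(s,x)=\mathcal{J}x^{\lambda}\mathrm{e}^{-sx}$ underlying \eqref{MainTh-1-2}, and then the closed $H$-function form \eqref{MainTh-1-3}. For the first, I start from $\mathcal{L}[x^{\lambda}\mathcal{I}\varphi](s)=\int_{0}^{\infty}\mathrm{e}^{-sx}x^{\lambda}(\mathcal{I}\varphi)(x)\,\mathrm{d}x$, insert \eqref{Def New operator I}, and reverse the order of the outer $x$-integral and the inner $t$-integral over the sector $\{0<t<x<\infty\}$. After the swap, the factor $t^{\nu h+\nu-1}$ from the $\mathcal{I}$-kernel supplies precisely the prefactor required in \eqref{Def New operator II}, so the inner integral over $x\in(t,\infty)$ combines into $(\mathcal{J}x^{\lambda}\mathrm{e}^{-sx})(t)$; this yields \eqref{MainTh-1-2} with the stated kernel. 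The reversal is legitimate by Tonelli, the hypotheses $\Re(s)>0$ and $x^{\lambda-\delta}\varphi(x)\in L^{1}(\mathbb{R}_{>0})$ furnishing the required dominating function on the sector.

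For the second assertion I must evaluate $\mathcal{J}x^{\lambda}\mathrm{e}^{-sx}$ in closed form. The tempting route of expanding $\mathrm{e}^{-sx}=\sum_{n\ge0}(-sx)^{n}/n!$ and applying \eqref{Lemma FI of Power Function II} termwise is \emph{not} admissible: since $\mathfrak{c}_{2}(\lambda+n)=\mathfrak{c}_{2}(\lambda)-n/\nu$ by \eqref{c1-c2}, the Lemma's convergence condition eventually fails and the integrals $\mathcal{J}x^{\lambda+n}$ diverge for large $n$. Instead I substitute the Mellin--Barnes representation $\mathrm{e}^{-z}=\tfrac{1}{2\pi\mathrm{i}}\int_{(c)}\Gamma(\xi)z^{-\xi}\,\mathrm{d}\xi$, with $z=su$ and $c>0$, inside the defining integral \eqref{Def New operator II} of $\mathcal{J}$, and interchange the $\xi$-contour with the $u$-integral.

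The operator then acts on the pure power $u^{\lambda-\xi}$, which \emph{is} covered by \eqref{Lemma FI of Power Function II}: along the contour one has $\Re(\mathfrak{c}_{2}(\lambda-\xi))=\Re(\mathfrak{c}_{2}(\lambda))+c/\nu>\Re(\mathfrak{c}_{2}(\lambda))$, so \eqref{MainTh-1-1} makes the Lemma applicable uniformly on $(c)$, and $\mathfrak{c}_{2}(\lambda-\xi)=\mathfrak{c}_{2}(\lambda)+\xi/\nu$. Pulling $x^{\lambda-\delta}$ and the finite $k$-sum out front leaves, for each $k$, the single Mellin--Barnes integral
\[
\frac{1}{2\pi\mathrm{i}}\int_{(c)}\frac{\Gamma(\xi)\,\Gamma(\mathfrak{c}_{2}(\lambda)+\xi/\nu)\,\Gamma(\mathfrak{c}_{2}(\lambda)+\mathfrak{p}_{k}+\xi/\nu)}{\Gamma(\mathfrak{c}_{2}(\lambda)+\mu-a+\xi/\nu)\,\Gamma(\mathfrak{c}_{2}(\lambda)+\mu-b+\xi/\nu)}\,(sx)^{-\xi}\,\mathrm{d}\xi,
\]
which is exactly \eqref{H-Function-Def} with $m=3$, $n=0$, $p=2$, $q=3$ and the parameter array displayed in \eqref{MainTh-1-3}. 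I would finish by noting that $\Delta=1>0$ in \eqref{H-function-Condition-2} and that, since $\Re(\mathfrak{c}_{2}(\lambda)+\mathfrak{p}_{k})>0$ under \eqref{MainTh-1-1}, the line $\Re(\xi)=c>0$ leaves every pole of the three numerator $\Gamma$'s on its left while there are no right-hand poles, so the contour is admissible and the $H$-function is well defined.

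The crux of the argument is analytic rather than algebraic: justifying the two interchanges of integration. The Tonelli step on the sector rests on the $L^{1}$-hypothesis, while the Mellin--Barnes interchange needs absolute convergence of the $u\times\xi$ double integral, which follows by combining the exponential decay of $|\Gamma(c+\mathrm{i}\tau)|$ as $|\tau|\to\infty$ with the convergence of the underlying $u$-integral guaranteed by \eqref{MainTh-1-1}. The conceptual point to get right is that, because the termwise expansion is illegitimate, the correct closed form is the full $H$-function; through its additional families of poles it encodes contributions beyond the naive integer-power series in $sx$ that a formal expansion would produce.
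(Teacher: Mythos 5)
Your proposal is correct and follows essentially the same route as the paper: Fubini's theorem to pass the Laplace integral onto the kernel $(\mathcal{J}x^{\lambda}\mathrm{e}^{-sx})(t)$, then the Mellin--Barnes representation of $\mathrm{e}^{-sx}$ combined with \eqref{Lemma FI of Power Function II} applied to $x^{\lambda-z}$, and identification of the resulting contour integral with $H^{3,0}_{2,3}$. The only cosmetic difference is that the paper makes condition \eqref{MainTh-1-1} do its work already in justifying the Fubini step (via B\"{u}hring's asymptotics of ${}_{r+2}F_{r+1}$ near unit argument), whereas you invoke it for the $u$-integral in the Mellin--Barnes interchange; both uses amount to the same convergence requirement.
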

\begin{proof}
	By Fubini's theorem, it is easy to see that
	\begin{align}
		\mathcal{L}[x^\lambda \mathcal{I}\varphi](x)
		&=\int_{0}^{\infty}x^{\lambda}\mathrm{e}^{-sx}\Bigg\{\frac{{\nu}x^{-\delta-\nu(\mu+ h)}}{\Gamma(\mu)}
		\int_{0}^{x}(x^\nu-t^\nu)^{\mu-1}\notag\\
		&\hspace{1cm}\cdot{}_{r+2}F_{r+1}\left[\begin{matrix}
			a,b,\\
			\mu,
		\end{matrix}~
		\begin{matrix}
			(f_r+m_r)\\
			(f_r)
		\end{matrix};1-\frac{t^\nu}{x^\nu}\right]
		\varphi(t)t^{\nu h+\nu-1}\mathrm{d}t\Bigg\}\mathrm{d}x\notag\\
		&=\int_{0}^{\infty}\varphi(t)\Bigg\{\frac{\nu t^{\nu h+\nu-1} }{\Gamma(\mu)}\int_{t}^{\infty}(x^\nu-t^\nu)^{\mu-1}\notag\\
		&\hspace{1cm}\cdot{}_{r+2}F_{r+1}\left[\begin{matrix}
			a,b,\\
			\mu,
		\end{matrix}~
		\begin{matrix}
			(f_r+m_r)\\
			(f_r) 
		\end{matrix};1-\frac{t^\nu}{x^\nu}\right]
		\mathrm{e}^{-sx}x^{\lambda-\delta-\nu(\mu+h)}\mathrm{d}x\Bigg\}\mathrm{d}t\notag\\
		&=\int_{0}^{\infty}\varphi(t)(\mathcal{J}x^\lambda\mathrm{e}^{-sx})(t)\mathrm{d}x
		=\int_{0}^{\infty}\varphi(x)K_{\mathcal{I}}(s,x)\mathrm{d}x,
		\label{MainTh-1-Proof-0}
	\end{align}
	provided that 
\begin{align*}
	I&:=
	\int_{0}^{\infty}|\varphi(x)|t^{\Re(\lambda-\delta)}
	\Bigg\{\int_{1}^{\infty}(u^\nu-1)^{\Re(\mu)-1}
	u^{\Re(\lambda-\delta)-\nu\Re(\mu)-\nu h}
	\notag\\
	&\hspace{1cm}\cdot\left|{}_{r+2}F_{r+1}\left[\begin{matrix}
		a,b,\\
		\mu,
	\end{matrix}~
	\begin{matrix}
		(f_r+m_r)\\
		(f_r) 
	\end{matrix};1-\frac{1}{u^\nu}\right]
	\right|\mathrm{e}^{-\Re(s)ut}\mathrm{d}u\Bigg\}\mathrm{d}t<\infty.
\end{align*}
Note that 
\begin{align*}
	I&\leq 
	\int_{0}^{\infty}|\varphi(x)|t^{\Re(\lambda-\delta)}
	\mathrm{e}^{-\Re(s)t}
	\Bigg\{\int_{1}^{\infty}(u^\nu-1)^{\Re(\mu)-1}
	u^{\Re(\lambda-\delta)-\nu\Re(\mu)-\nu h}
	\notag\\
	&\hspace{1cm}\cdot\left|{}_{r+2}F_{r+1}\left[\begin{matrix}
		a,b,\\
		\mu,
	\end{matrix}~
	\begin{matrix}
		(f_r+m_r)\\
		(f_r) 
	\end{matrix};1-\frac{1}{u^\nu}\right]
	\right|\mathrm{d}u\Bigg\}\mathrm{d}t.
\end{align*}
Since $x^{\lambda-\delta} \varphi(x)\in L^1(\mathbb{R}_{>0})$ and $\Re(s)>0$, it is sufficient to guarantee the convergence of the integral
\[
I':=\int_{1}^{\infty}(u^\nu-1)^{\Re(\mu)-1}
u^{\Re(\lambda-\delta)-\nu\Re(\mu)-\nu h}
\left|{}_{r+2}F_{r+1}\left[\begin{matrix}
	a,b,\\
	\mu,
\end{matrix}~
\begin{matrix}
	(f_r+m_r)\\
	(f_r) 
\end{matrix};1-\frac{1}{u^\nu}\right]
\right|\mathrm{d}u.
\]
Recall that
\begin{equation}\label{MainTh-1-Proof-1}
	{}_{p+1}F_{p}\left[\begin{matrix}
		a_1,\cdots,a_{p+1}\\
		b_1,\cdots,b_p
	\end{matrix};1-z\right]
	=\begin{cases}
		\mathcal{O}(1), & \text{$\Re(\psi_p)>0$ or $\Re(\psi_p)=0$ $(\psi_p\neq0)$}; \\
		\mathcal{O}(z^{\Re(\psi_p)}), & \Re(\psi_p)<0;\\
		\mathcal{O}(\log z), & \psi_p=0,
	\end{cases}
\end{equation}
as $z\rightarrow 0^{+}$, where $\psi_p:=\sum_{\ell=1}^{p}b_\ell-\sum_{\ell=1}^{p+1}a_\ell$ (see \cite{Buhring-1992}). So we have
\[
{}_{r+2}F_{r+1}\left[\begin{matrix}
	a,b,\\
	\mu,
\end{matrix}~
\begin{matrix}
	(f_r+m_r)\\
	(f_r) 
\end{matrix};1-\frac{1}{u^\nu}\right]
=\begin{cases}
	\mathcal{O}(1), &  \text{$\Re(\mathfrak{p}_m)>0$ or $\Re(\mathfrak{p}_m)=0$ $(\mathfrak{p}_m\neq0)$}; \\
	\mathcal{O}(u^{-\nu\Re(\mathfrak{p}_m)}), & \Re(\mathfrak{p}_m)<0;\\
	\mathcal{O}(\log u), & \mathfrak{p}_m=0,
\end{cases}
\]
as $u\rightarrow +\infty$, where $\mathfrak{p}_m$ is defined by \eqref{pk}. In addition, 
\[
{}_{r+2}F_{r+1}\left[\begin{matrix}
	a,b,\\
	\mu,
\end{matrix}~
\begin{matrix}
	(f_r+m_r)\\
	(f_r) 
\end{matrix};1-\frac{1}{u^\nu}\right]=\mathcal{O}(1),
\]
as $u\rightarrow 1^{+}$. 
The condition \eqref{MainTh-1-1} is therefore obtained by ensuring the convergence of the integral
\[
\int_{T}^{\infty}(u^\nu-1)^{\Re(\mu)-1}
u^{\Re(\lambda-\delta)-\nu\Re(\mu)-\nu h-\nu\min[0,\Re(\mathfrak{p}_m)]}
\mathrm{d}u.
\]

We now evaluate $K_{\mathcal{I}}(s,x)
\equiv\mathcal{J}x^\lambda \mathrm{e}^{-sx}$ given by \eqref{MainTh-1-3}. Let us express $\mathrm{e}^{-sx}$ as its Mellin-Barnes integral
\begin{equation}\label{MainTh-1-Proof-2}
	\mathrm{e}^{-sx}=\frac{1}{2\pi\mathrm{i}}
	\int_{\mathrm{i}c\infty}\Gamma(z)(sx)^{-z}\mathrm{d}z,
\end{equation}
where $c>0$ and $|\arg(s)|<\pi/2$. Under the condition \eqref{MainTh-1-1}, we use \eqref{Lemma FI of Power Function II} to obtain
\begin{align}
	\mathcal{J}x^\lambda \mathrm{e}^{-sx}
	&=\frac{1}{2\pi\mathrm{i}}
	\int_{\mathrm{i}c\infty}\Gamma(z)s^{-z}\mathcal{J}x^{\lambda-z}\mathrm{d}z\notag\\
	&=x^{\lambda-\delta}\sum_{k=0}^{m}\frac{A_k}{A_0}(a)_k(b)_k
	\frac{1}{2\pi\mathrm{i}}\int_{\mathrm{i}c\infty}\frac{\Gamma(z)
		\Gamma(\mathfrak{c}_2(\lambda-z))
		\Gamma(\mathfrak{c}_2(\lambda-z)+\mathfrak{p}_k)}{
		\Gamma(\mathfrak{c}_2(\lambda-z)+\mu-a)
		\Gamma(\mathfrak{c}_2(\lambda-z)+\mu-b)}(sx)^{-z}\mathrm{d}z\notag\\
	&=x^{\lambda-\delta}\sum_{k=0}^{m}\frac{A_k}{A_0}(a)_k(b)_k
	\frac{1}{2\pi\mathrm{i}}\int_{\mathrm{i}c\infty}\frac{\Gamma(z)
		\Gamma(\mathfrak{c}_2(\lambda)+z/\nu)
		\Gamma(\mathfrak{c}_2(\lambda)+\mathfrak{p}_k+z/\nu)}{
		\Gamma(\mathfrak{c}_2(\lambda)+\mu-a+z/\nu)
		\Gamma(\mathfrak{c}_2(\lambda)+\mu-b+z/\nu)}(sx)^{-z}\mathrm{d}z.
	\label{MainTh-1-Proof-3}
\end{align}
From the definition \eqref{H-Function-Def} of the $H$-function, we have
\begin{align}
	&\frac{1}{2\pi\mathrm{i}}\int_{\mathrm{i}c\infty}\frac{\Gamma(z)
		\Gamma(\mathfrak{c}_2(\lambda)+z/\nu)
		\Gamma(\mathfrak{c}_2(\lambda)+\mathfrak{p}_k+z/\nu)}{
		\Gamma(\mathfrak{c}_2(\lambda)+\mu-a+z/\nu)
		\Gamma(\mathfrak{c}_2(\lambda)+\mu-b+z/\nu)}(sx)^{-z}\mathrm{d}z\notag\\
	&\hspace{1cm}=H_{2,3}^{3,0}\left[sx\left|\begin{matrix}
		(\mathfrak{c}_2(\lambda)+\mu-a,1/\nu),(\mathfrak{c}_2(\lambda)+\mu-b,1/\nu)\\
		(0,1),(\mathfrak{c}_2(\lambda),1/\nu),(\mathfrak{c}_2(\lambda)+\mathfrak{p}_k,1/\nu)
	\end{matrix}\right.\right].
	\label{MainTh-1-Proof-4}
\end{align}
The corresponding indexes \eqref{H-function-Condition-1}--\eqref{H-function-Condition-5} concerning the above $H$-function $H_{2,3}^{3,0}$ satisfies
\begin{equation}\label{MainTh-1-Proof-5}
	\Delta=a^{*}=a_1^{*}=\delta^{*}=1
	~~~\text{and}~~~
	\mu^{*}=-\mu-k-\frac{1}{2}~(0\leq k\leq m).
\end{equation}
Using \eqref{MainTh-1-Proof-3} and \eqref{MainTh-1-Proof-4}, we obtain \eqref{MainTh-1-3} and the result \eqref{MainTh-1-2} of Theorem \ref{MainTh-1} follows from \eqref{MainTh-1-3} and \eqref{MainTh-1-Proof-0}. This completes the proof.
\end{proof}

\begin{remark}\label{MainTh-1-Remark}
	Since the kernel $K_{\mathcal{I}}(s,x)$ is a finite sum of $H$-functions, a direct analysis of its behaviour near zero and infinity would therefore be interesting to expedite. Under the conditions given in \eqref{MainTh-1-Proof-5}, we can use Corollary 1.10.1 of \cite{Kilbas-Saigo-Book-2004} to obtain
	\[
	H_{2,3}^{3,0}\left[sx\left|\begin{matrix}
		(\mathfrak{c}_2(\lambda)+\mu-a,1/\nu),(\mathfrak{c}_2(\lambda)+\mu-b,1/\nu)\\
		(0,1),(\mathfrak{c}_2(\lambda),1/\nu),(\mathfrak{c}_2(\lambda)+\mathfrak{p}_k,1/\nu)
	\end{matrix}\right.\right]
	=\mathcal{O}\big(x^{-\Re(\mu)-k}\mathrm{e}^{x|s|\cos(\pi+\arg(s))}\big)~(x\rightarrow+\infty),
	\]
	and thus from \eqref{MainTh-1-3} we have
	\begin{equation}\label{MainTh-1-Remark-1}
		K_{\mathcal{I}}(s,x)=\big(x^{\Re(\lambda-\delta-\mu)}\mathrm{e}^{x|s|\cos(\pi+\arg(s))}\big)
		~(x\rightarrow+\infty).
	\end{equation}
	On the other hand, by using \cite[p. 61, Eq. (1.12.23)]{Kilbas-Srivastava-Trujillo-Book-2006}, we obtain
	\[
	H_{2,3}^{3,0}\left[sx\left|\begin{matrix}
		(\mathfrak{c}_2(\lambda)+\mu-a,1/\nu),(\mathfrak{c}_2(\lambda)+\mu-b,1/\nu)\\
		(0,1),(\mathfrak{c}_2(\lambda),1/\nu),(\mathfrak{c}_2(\lambda)+\mathfrak{p}_k,1/\nu)
	\end{matrix}\right.\right]
	=\mathcal{O}\big(x^{\rho_k^{*}}\big)
	~(x\rightarrow 0^{+})
	\]
	where $\rho_k^{*}:=\min[0,\nu\Re(\mathfrak{c}_2(\lambda)),\nu\Re(\mathfrak{c}_2(\lambda))+\nu\Re(\mathfrak{p}_m)]$. 
	Hence, it follows from \eqref{MainTh-1-3} that
	\begin{align}
		K_{\mathcal{I}}(s,x)
		&=\mathcal{O}\big(x^{\Re(\lambda-\delta)+\min[\rho_0^{*},\cdots,\rho_m^{*}]}\big)\notag\\
		&=\mathcal{O}\big(x^{\Re(\lambda-\delta)+\nu\min[0,\Re(\mathfrak{c}_2(\lambda)),\Re(\mathfrak{c}_2(\lambda))+\Re(\mathfrak{p}_m)]}\big)~(x\rightarrow 0^{+}).\label{MainTh-1-Remark-2}
	\end{align}
In view of the condition \eqref{MainTh-1-1}, we observe that the expression $\nu\min[0,\Re(\mathfrak{c}_2(\lambda)),\Re(\mathfrak{c}_2(\lambda))+\Re(\mathfrak{p}_m)]$ in \eqref{MainTh-1-Remark-2} equals to zero, and consequently, we infer that
\begin{equation}\label{MainTh-1-Remark-3}
	K_{\mathcal{I}}(s,x)=\mathcal{O}(x^{\Re(\lambda-\delta)})~(x\rightarrow 0^{+}).
\end{equation}
Thus, the assertions \eqref{MainTh-1-Remark-1} and \eqref{MainTh-1-Remark-3} suggest the imposition of the condition $x^{\lambda-\delta}\varphi(x)\in L^1(\mathbb{R}_{>0})$ as stated in the hypothesis of Theorem \ref{MainTh-1}. 
\end{remark}

\begin{theorem}\label{MainTh-2}
	Let the conditions in Definition \ref{New Definition} be satisfied, and let 
	\begin{equation}\label{MainTh-2-1}
		\Re(\mathfrak{c}_1(\lambda))
		>-\min[0,\Re(\mathfrak{p}_m)].
	\end{equation}
	Also, let $x^{\lambda-\delta} \varphi(x)\in L^1(\mathbb{R}_{>0})$. Then, for $\Re(s)>0$, we have 
	\begin{equation}\label{MainTh-2-2}
		\mathcal{L}[x^\lambda \mathcal{J}\varphi](s)=\int_{0}^{\infty}K_{\mathcal{J}}(s,x)\varphi(x)\mathrm{d}x,
	\end{equation}
	where
	\begin{align}
		K_{\mathcal{J}}(s,x)\equiv \mathcal{I}x^\lambda \mathrm{e}^{-sx}
		&:=x^{\lambda-\delta}\sum_{k=0}^{m}\frac{A_k}{A_0}(a)_k (b)_k\notag\\
		&\hspace{0.5cm}\cdot {}_{2}\Psi_{2}\left[\left.\begin{matrix}
			(\mathfrak{c}_1(\lambda),1/\nu), (\mathfrak{c}_1(\lambda)+\mathfrak{p}_k,1/\nu)\\
			(\mathfrak{c}_1(\lambda)+\mu-a,1/\nu),
			(\mathfrak{c}_1(\lambda)+\mu-b,1/\nu)
		\end{matrix}\right|-sx\right].\label{MainTh-2-3}
	\end{align}
\end{theorem}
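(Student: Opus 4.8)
The plan is to follow the proof of Theorem~\ref{MainTh-1} with the roles of $\mathcal{I}$ and $\mathcal{J}$ interchanged. First I would write $\mathcal{L}[x^\lambda\mathcal{J}\varphi](s)=\int_0^\infty x^\lambda\mathrm{e}^{-sx}(\mathcal{J}\varphi)(x)\,\mathrm{d}x$ and insert the definition \eqref{Def New operator II}. Since $\mathcal{J}$ integrates over $(x,\infty)$, the iterated integral runs over the region $\{0<x<t<\infty\}$; applying Fubini's theorem and integrating first in $x$ over $(0,t)$ reproduces exactly the operator $\mathcal{I}$ of \eqref{Def New operator I} acting on $x^\lambda\mathrm{e}^{-sx}$. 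This gives $\mathcal{L}[x^\lambda\mathcal{J}\varphi](s)=\int_0^\infty\varphi(x)(\mathcal{I}x^\lambda\mathrm{e}^{-sx})(x)\,\mathrm{d}x$, identifying the kernel $K_{\mathcal{J}}(s,x)=\mathcal{I}x^\lambda\mathrm{e}^{-sx}$ asserted in \eqref{MainTh-2-2}.

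Next I would justify Fubini by verifying absolute convergence of the double integral. After the substitution $x=ut$ with $u\in(0,1)$, the inner integral factors out a power $t^{\lambda-\delta}$, and since $\mathrm{e}^{-\Re(s)ut}\le 1$ for $u,t,\Re(s)>0$, the double integral is dominated by the product of $\|x^{\lambda-\delta}\varphi\|_1$ and the single integral $I':=\int_0^1(1-u^\nu)^{\Re(\mu)-1}\big|{}_{r+2}F_{r+1}[\cdots;1-u^\nu]\big|\,u^{\Re(\lambda)+\nu h+\nu-1}\,\mathrm{d}u$. The endpoint $u\to1^-$ is integrable precisely when $\Re(\mu)>0$, while as $u\to0^+$ the argument of the hypergeometric tends to $1$; applying the one-sided asymptotic \eqref{MainTh-1-Proof-1} with leading parameter $\psi_p=\mathfrak{p}_m$ shows the integrand behaves like $u^{\Re(\lambda)+\nu h+\nu-1+\nu\min[0,\Re(\mathfrak{p}_m)]}$ (up to a logarithm), so that $I'<\infty$ is equivalent to $\Re(\mathfrak{c}_1(\lambda))>-\min[0,\Re(\mathfrak{p}_m)]$, which is exactly the hypothesis \eqref{MainTh-2-1}.

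To evaluate the kernel I would insert the Mellin-Barnes representation \eqref{MainTh-1-Proof-2} of $\mathrm{e}^{-sx}$ and interchange $\mathcal{I}$ with the $z$-contour integral, giving $K_{\mathcal{J}}(s,x)=\frac{1}{2\pi\mathrm{i}}\int_{\mathrm{i}c\infty}\Gamma(z)s^{-z}\,\mathcal{I}x^{\lambda-z}\,\mathrm{d}z$. The first formula of Lemma~\ref{Lemma FI of Power Function} then evaluates $\mathcal{I}x^{\lambda-z}$, and using $\mathfrak{c}_1(\lambda-z)=\mathfrak{c}_1(\lambda)-z/\nu$ from \eqref{c1-c2}, the contour integral for each $k$ carries the shifts $-z/\nu$ inside the gamma factors. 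This is the crucial sign difference from Theorem~\ref{MainTh-1}: there $\mathfrak{c}_2(\lambda-z)=\mathfrak{c}_2(\lambda)+z/\nu$ gave gammas $\Gamma(\,\cdot+z/\nu)$ and hence an $H_{2,3}^{3,0}$, whereas here the $-z/\nu$ shifts recast the numerator gammas into the form $\Gamma(1-a_i-A_iz)$, producing an $H_{2,3}^{1,2}$ that collapses to a Fox-Wright function.

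Finally, matching the integrand to the definition \eqref{H-Function-Def} yields, for each $k$, the function
\[
H_{2,3}^{1,2}\left[sx\left|\begin{matrix}
(1-\mathfrak{c}_1(\lambda),1/\nu),(1-\mathfrak{c}_1(\lambda)-\mathfrak{p}_k,1/\nu)\\
(0,1),(1-\mathfrak{c}_1(\lambda)-\mu+a,1/\nu),(1-\mathfrak{c}_1(\lambda)-\mu+b,1/\nu)
\end{matrix}\right.\right],
\]
and the conversion relation \eqref{H-function-to-Fox-WrightF} with $p=q=2$ turns this precisely into the ${}_2\Psi_2$ of \eqref{MainTh-2-3}; equivalently, closing the contour to the left and summing the residues of $\Gamma(z)$ at $z=-n$ produces the same series directly. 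Since the index $\Delta'=\tfrac1\nu+\tfrac1\nu-\tfrac1\nu-\tfrac1\nu=0>-1$ in \eqref{Delta1}, the series converges for all $sx$, so the kernel is well defined. I expect the main obstacle to be the careful parameter bookkeeping in this last step — tracking the sign flip $+z/\nu\mapsto -z/\nu$ and correctly aligning the $H$-function parameters with those of \eqref{H-function-to-Fox-WrightF} — together with justifying the interchange of $\mathcal{I}$ with the Mellin-Barnes contour, which requires an absolute-convergence estimate analogous to the Fubini argument in the second step.
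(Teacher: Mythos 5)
Your proposal is correct and follows essentially the same route as the paper's own proof: Fubini's theorem to transpose $\mathcal{J}$ into $\mathcal{I}$ acting on $x^\lambda\mathrm{e}^{-sx}$, the substitution $x=ut$ together with the asymptotic \eqref{MainTh-1-Proof-1} (with $\psi_{r+1}=\mathfrak{p}_m$) to reduce absolute convergence to the hypothesis \eqref{MainTh-2-1}, and the Mellin--Barnes representation of $\mathrm{e}^{-sx}$ combined with Lemma \ref{Lemma FI of Power Function} and the reduction \eqref{H-function-to-Fox-WrightF} of $H_{2,3}^{1,2}$ to ${}_{2}\Psi_{2}$. Your endpoint analysis of $J'$ near $u=0$ is in fact slightly more explicit than the paper's, which simply cites \eqref{MainTh-1-Proof-1}.
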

\begin{proof}
	By Fubini's theorem, we have
	\begin{align}
		\mathcal{L}[x^\lambda \mathcal{J}f](x)
		&=\int_{0}^{\infty}x^{\lambda}\mathrm{e}^{-sx}\Bigg\{\frac{\nu x^{\nu h+\nu-1}}{\Gamma(\mu)}
		\int_{x}^{\infty}(t^\nu-x^\nu)^{\mu-1}\notag\\
		&\hspace{1cm}\cdot {}_{r+2}F_{r+1}\left[\begin{matrix}
			a,b,\\
			\mu,
		\end{matrix}~
		\begin{matrix}
			(f_r+m_r)\\
			(f_r)
		\end{matrix};1-\frac{x^\nu}{t^\nu}\right]
		\varphi(t)t^{-\delta-\nu(\mu+ h)}\mathrm{d}t\Bigg\}\mathrm{d}x\notag\\
		&=\int_{0}^{\infty}\varphi(t)\Bigg\{\frac{\nu t^{-\delta-\nu(\mu+h)}}{\Gamma(\mu)}\int_{0}^{t}(t^\nu-x^\nu)^{\mu-1}\notag\\
		&\hspace{1cm}\cdot{}_{r+2}F_{r+1}\left[\begin{matrix}
			a,b,\\
			\mu,
		\end{matrix}~
		\begin{matrix}
			(f_r+m_r)\\
			(f_r)
		\end{matrix};1-\frac{x^\nu}{t^\nu}\right]\mathrm{e}^{-sx}
		x^{\lambda+\nu h+\nu-1}\mathrm{d}x\Bigg\}\mathrm{d}t\notag\\
		&=\int_{0}^{\infty}\varphi(t)(\mathcal{I}x^\lambda \mathrm{e}^{-sx})(t)\mathrm{d}t
		=\int_{0}^{\infty}\varphi(x)K_{\mathcal{J}}(s,x)\mathrm{d}x,
		\label{MainTh-2-Proof-0}
	\end{align}
	provided that
\begin{align*}
	J&:=\int_{0}^{\infty}|\varphi(t)|t^{\Re(\lambda-\delta)}
	\Bigg\{\int_{0}^{1}(1-u^\nu)^{\Re(\mu)-1}u^{\Re(\lambda)+\nu h+\nu-1}\\
	&\hspace{1cm}\cdot\left|{}_{r+2}F_{r+1}\left[\begin{matrix}
		a,b,\\
		\mu,
	\end{matrix}~
	\begin{matrix}
		(f_r+m_r)\\
		(f_r)
	\end{matrix};1-u^\nu\right]\right|\mathrm{e}^{-\Re(s)tu}\mathrm{d}u\Bigg\}\mathrm{d}t<\infty.
\end{align*}
Note that
\begin{align*}
	J&\leq \int_{0}^{\infty}|\varphi(t)|t^{\Re(\lambda-\delta)}
	\Bigg\{\int_{0}^{1}(1-u^\nu)^{\Re(\mu)-1}u^{\Re(\lambda)+\nu h+\nu-1}\\
	&\hspace{1cm}\cdot\left|{}_{r+2}F_{r+1}\left[\begin{matrix}
		a,b,\\
		\mu,
	\end{matrix}~
	\begin{matrix}
		(f_r+m_r)\\
		(f_r)
	\end{matrix};1-u^\nu\right]\right|\mathrm{d}u\Bigg\}\mathrm{d}t.
\end{align*}
Since $x^{\lambda-\delta} \varphi(x)\in L^1(\mathbb{R}_{>0})$ and $\Re(s)>0$, it is sufficient to guarantee the convergence of the integral 
\[
J':=\int_{0}^{1}(1-u^\nu)^{\Re(\mu)-1}u^{\Re(\lambda)+\nu h+\nu-1}
\left|{}_{r+2}F_{r+1}\left[\begin{matrix}
	a,b,\\
	\mu,
\end{matrix}~
\begin{matrix}
	(f_r+m_r)\\
	(f_r)
\end{matrix};1-u^\nu\right]\right|\mathrm{d}u.
\]
In view of \eqref{MainTh-1-Proof-1}, we know that $J'$ is finite if the condition \eqref{MainTh-2-1} is satisfied.

Next, we evaluate $K_{\mathcal{J}}(s,x)\equiv\mathcal{I}x^\lambda \mathrm{e}^{-sx}$ involved in \eqref{MainTh-2-Proof-0}. Using the integral representation \eqref{MainTh-1-Proof-2}, we obtain
\begin{align}
	\mathcal{I}x^\lambda \mathrm{e}^{-sx}
	&=\frac{1}{2\pi\mathrm{i}}
	\int_{\mathrm{i}c\infty}\Gamma(z)s^{-z}\mathcal{I}x^{\lambda-z}\mathrm{d}z\notag\\
	&=x^{\lambda-\delta}\sum_{k=0}^{m}\frac{A_k}{A_0}(a)_k (b)_k
	\frac{1}{2\pi\mathrm{i}}\int_{\mathrm{i}c\infty}
	\frac{\Gamma(z)\Gamma(\mathfrak{c}_1(\lambda-z))
		\Gamma(\mathfrak{c}_1(\lambda-z)+\mathfrak{p}_k)}{
		\Gamma(\mathfrak{c}_1(\lambda-z)+\mu-a)
		\Gamma(\mathfrak{c}_1(\lambda-z)+\mu-b)}(xs)^{-z}\mathrm{d}z\notag\\
	&=x^{\lambda-\delta}\sum_{k=0}^{m}\frac{A_k}{A_0}(a)_k (b)_k
	\frac{1}{2\pi\mathrm{i}}\int_{\mathrm{i}c\infty}
	\frac{\Gamma(z)\Gamma(\mathfrak{c}_1(\lambda)-z/\nu)
		\Gamma(\mathfrak{c}_1(\lambda)+\mathfrak{p}_k-z/\nu)}{
		\Gamma(\mathfrak{c}_1(\lambda)+\mu-a-z/\nu)
		\Gamma(\mathfrak{c}_1(\lambda)+\mu-b-z/\nu)}(xs)^{-z}\mathrm{d}z.
	\label{MainTh-2-Proof-1}
\end{align}
In view of \eqref{H-Function-Def} and the reduction formula \eqref{H-function-to-Fox-WrightF}, we find that
\begin{align}
	&\frac{1}{2\pi\mathrm{i}}\int_{\mathrm{i}c\infty}
	\frac{\Gamma(z)\Gamma(\mathfrak{c}_1(\lambda)-z/\nu)
		\Gamma(\mathfrak{c}_1(\lambda)+\mathfrak{p}_k-z/\nu)}{
		\Gamma(\mathfrak{c}_1(\lambda)+\mu-a-z/\nu)
		\Gamma(\mathfrak{c}_1(\lambda)+\mu-b-z/\nu)}(xs)^{-z}\mathrm{d}z\notag\\
	&\hspace{1cm}=H_{2,3}^{1,2}\left[sx\left|\begin{matrix}
		(1-\mathfrak{c}_1(\lambda),1/\nu), (1-\mathfrak{c}_1(\lambda)-\mathfrak{p}_k,1/\nu)\\
		(0,1), (1-\mathfrak{c}_1(\lambda)-\mu+a,1/\nu), (1-\mathfrak{c}_1(\lambda)-\mu+b,1/\nu)
	\end{matrix}\right.\right]\notag\\
	&\hspace{1cm}={}_{2}\Psi_{2}\left[\left.\begin{matrix}
		(\mathfrak{c}_1(\lambda),1/\nu), (\mathfrak{c}_1(\lambda)+\mathfrak{p}_k,1/\nu)\\
		(\mathfrak{c}_1(\lambda)+\mu-a,1/\nu),
		(\mathfrak{c}_1(\lambda)+\mu-b,1/\nu)
	\end{matrix}\right|-sx\right].
	\label{MainTh-2-Proof-2}
\end{align}
The relation \eqref{Delta1} concerning the ${}_{2}\Psi_{2}$-function satisfies $\Delta'=0$ and the indexes \eqref{H-function-Condition-1} and \eqref{H-function-Condition-2} satisfies 
\[
\Delta=1~~~\text{and}~~~a^{*}=1.
\]
Now combining \eqref{MainTh-2-Proof-1} and \eqref{MainTh-2-Proof-2}, we get immediately \eqref{MainTh-2-3} and the desired result \eqref{MainTh-2-2} of Theorem \ref{MainTh-2} follows from \eqref{MainTh-2-3} and \eqref{MainTh-2-Proof-0}. This proof completes the proof. 
\end{proof}

\begin{remark}
As in Remark \ref{MainTh-1-Remark}, we give a direct analysis of the behaviour of the kernel $K_{\mathcal{J}}(s,x)$ involved in the integral operator \eqref{MainTh-2-2}. Using the formula \cite[p. 11, Eq. (1.5.13)]{Kilbas-Saigo-Book-2004}, we have
\[
H_{2,3}^{1,2}\left[sx\left|\begin{matrix}
	(1-\mathfrak{c}_1(\lambda),1/\nu), (1-\mathfrak{c}_1(\lambda)-\mathfrak{p}_k,1/\nu)\\
	(0,1), (1-\mathfrak{c}_1(\lambda)-\mu+a,1/\nu), (1-\mathfrak{c}_1(\lambda)-\mu+b,1/\nu)
\end{matrix}\right.\right]
=\mathcal{O}\big(x^{\rho_k}\big)~(x\rightarrow+\infty),
\]
where $\rho_k:=-\nu\min[\Re(\mathfrak{c}_1(\lambda)),\Re(\mathfrak{c}_1(\lambda))+\Re(\mathfrak{p}_m)]$ and $|\arg(s)|<\pi/2$. Thus, 
\begin{align}\label{MainTh-2-Remark-1}
	K_{\mathcal{J}}(s,x)
	&=\mathcal{O}\big(x^{\Re(\lambda-\delta)+\max[\rho_0,\cdots,\rho_m]}\big) \notag\\
	&=\mathcal{O}\big(x^{\Re(\lambda-\delta)-\nu\Re(\mathfrak{c}_1(\lambda))-\nu\min[0,\Re(\mathfrak{p}_m)]}\big) \notag\\
	&=\mathcal{O}\big(x^{\Re(\lambda-\delta)}\big)~(x\rightarrow+\infty).
\end{align}
On the other hand, we have
\[
H_{2,3}^{1,2}\left[sx\left|\begin{matrix}
	(1-\mathfrak{c}_1(\lambda),1/\nu), (1-\mathfrak{c}_1(\lambda)-\mathfrak{p}_k,1/\nu)\\
	(0,1), (1-\mathfrak{c}_1(\lambda)-\mu+a,1/\nu), (1-\mathfrak{c}_1(\lambda)-\mu+b,1/\nu)
\end{matrix}\right.\right]
=\mathcal{O}(1)~(x\rightarrow0^{+}),
\]
and therefore
\begin{equation}\label{MainTh-2-Remark-2} 
	K_{\mathcal{J}}(s,x)=\mathcal{O}\big(x^{\Re(\lambda-\delta)}\big)
	~(x\rightarrow 0^{+}).
\end{equation}
The relations \eqref{MainTh-2-Remark-1} and \eqref{MainTh-2-Remark-2} therefore suggest the condition that $x^{\lambda-\delta}\varphi(x)\in L^1(\mathbb{R}_{>0})$, as given in the hypothesis of Theorem \ref{MainTh-2}. It is worth mentioning here that the same asymptotic behaviour can also be obtained by using the theory of the Fox-Wright function; see Paris and Kaminski \cite[p. 57, Case (i)]{Paris-Kaminski-Book-2001}. 
\end{remark}

Finally, we conclude this section by pointing out that Theorems \ref{MainTh-1} and \ref{MainTh-2} provide generalizations to the results of Srivastava \emph{et al.} \cite[p. 6, Theorem 3]{Srivastava-Saigo-Raina-1993}.

\end{document}